\newtheorem{theorem}{Theorem}
\newtheorem*{theorem*}{Theorem}
\newtheorem{lemma}[theorem]{Lemma}
\newtheorem{remark}[theorem]{Remark}
\newtheorem{proposition}[theorem]{Proposition}
\newtheorem{definition}[theorem]{Definition}
\newcommand*{\myproofname}{Proof}
\DeclareMathOperator{\W}{W}
\DeclareMathOperator{\ecc}{ecc}
\DeclareMathOperator{\diam}{diam}
\DeclareMathOperator{\rad}{rad}
\DeclareMathOperator{\tsp}{tsp}
\DeclareMathOperator{\Wtsp3}{W_{tsp,3}}
\DeclareMathOperator{\Wtspk}{W_{tsp,k}}
\DeclareMathOperator{\mutspk}{\mu_{tsp,k}}
\DeclareMathOperator{\mutsp3}{\mu_{tsp,3}}
\newcommand{\Exp}{\,\mathbb{E}}
\def \d{\, \mathrm{d}}
\def\vc{\overrightarrow}
\newcommand*{\floorfrac}[2]{\mathopen{}\left\lfloor\frac{#1}{#2}\right\rfloor\mathclose{}}
\newcommand*{\abs}[1]{\lvert #1\rvert}
\title{The average solution of a TSP instance in a graph}  
\author{
Stijn Cambie \thanks{Department of Computer Science, KU Leuven Campus Kulak-Kortrijk, 8500 Kortrijk, Belgium.\\ Supported by the Institute for Basic Science (IBS-R029-C4) and a FWO grant with grant number 1225224N.
E-mail: {\tt stijn.cambie@hotmail.com}.
}
}
\begin{document}
\parindent=0cm

\maketitle

\begin{abstract}
    We define the average $k$-TSP distance $\mutspk$ of a graph $G$ as the average length of a shortest closed walk visiting $k$ vertices, i.e. the expected length of the solution for a random TSP instance with $k$ uniformly random chosen vertices. We prove relations with the average $k$-Steiner distance and characterize the cases where equality occurs. We also give sharp bounds for $\mutspk(G)$ given the order of the graph.
\end{abstract}

\textbf{keywords: travelling salesman problem, average distance, Wiener index, Steiner distance, TSP distance}

\section{Introduction}\label{sec:intro}
The travelling salesman problem (TSP for short) asks for a shortest closed walk (for definitions and notation, the reader less familiar with the topic can first have a look at Subsection~\ref{subsec:def&not}) in a graph that contains a given set of vertices. This can be considered more generally for (weighted) digraphs.
It is an important problem in operations research and computer science. For more information, see e.g.~\cite{GP02}. 

The TSP problem has been studied in many directions, but a pure mathematical study of the extremal behaviour of the problem seems lacking.
That is, usually one solves the TSP for a given instance (a fixed weighted graph and vertices to visit) in e.g. logistics, manufacturing, or circuit board testing. 
One can also wonder in advance how the city plan, the location of every tool, or how the components on the circuit board have to be put such that the TSPs that have to be solved afterward are expected to have good solutions. 
As a more concrete example, in a hospital setting, a nurse can efficiently solve an instance of the Traveling Salesman Problem (TSP) to ensure the timely delivery of medication to all patients in need. When the hospital still has to be constructed, one can try to construct the hospital with the desired number of rooms and under practical conditions in such a way that the expected travel distance of the nurse will be small.
Here we can assume that patients who need some type of medication are uniformly distributed among rooms over time.

In this paper, we start this study in the most fundamental setting, and do so by relating the extremal properties on the average solution of certain TSP instances on graphs, with well-studied parameters such as the Steiner distance and total distance of a graph. These connections may help address some of the broader questions by linking them to established results or providing insight into the anticipated outcomes.
 
We define the analogue of Steiner distance and Steiner Wiener index for the TSP problem; the average $k$-TSP distance and $k$-TSP Wiener index.
The analogues of certain other distance measures such as e.g. eccentricity, diameter and radius for TSP can be defined as well, which is done in the conclusion for completeness.

\begin{definition}\label{def:Wtspk}
    In a graph $G$, for $k$ vertices $v_1,v_2, \ldots, v_k$ we define the $k$-TSP distance\\ $\tsp_k(v_1,v_2, \ldots, v_k)$ as the length of a shortest closed walk that visits all vertices $v_1,v_2, \ldots, v_k$.\\
    The $k$-TSP Wiener index of a graph $G$ is defined as 
    $$\Wtspk(G)= \sum_{\binom{V}{k}} \tsp_k(v_1,v_2, \ldots, v_k).$$\\
    The average $k$-TSP distance equals 
    $$\mutspk(G)=\frac{\Wtspk(G)}{\binom{n}{k}}.$$
\end{definition}

The $k$-TSP Wiener index of a digraph can be defined compeletely similar as in the graph case (see~\cref{def:Wtspk_Di}).
In both cases, one can consider weighted versions, as is the case for TSP instances as well.
In that case, every edge or arc is assigned some weight, and $\tsp_k(v_1,\ldots,v_k)$ denotes the minimum sum of weights over all closed walks visiting each of these vertices.
An asymmetric TSP has to be formulated with weights on the arcs of a digraph, where the arcs $\vc{uv}$ and $\vc{vu}$ do not necessarily get the same weight. 

Both the TSP Wiener index and Steiner Wiener index are proportional (with factors equal to $2$ and $1$ respectively) with the usual Wiener index when $k=2$.
When $k=3$, it was proven in~\cite[cor.~4.5]{LMG16} that for trees $W_3(T)=\frac{n-2}{2} W(T)$. Nevertheless this is not true for all graphs, while we will note that $\Wtsp3(G)=(n-2)W(G)$ is true for all graphs $G$ and so in some sense this variant behaves nicer.
As a main result, we characterize the graphs for which certain TSP Wiener indices and Steiner Wiener indices are equal up to exactly a factor $2.$
By dividing both quantities by $\binom{n}{k},$ one obtains the equivalent inequality $\mutspk(G)\le 2\mu_k(G).$

\begin{theorem}\label{thr:TSPW=2W}
    For a graph $G$ and integer $k\ge 2$, we have $\Wtspk(G)\le 2\W_k(G).$
    Equality holds if and only if $k=2$, $k=3$ and $G$ contains no vertices $u,v,w$ for which $2\max\{d(u,v),d(u,w),d(v,w)\}<d(u,v)+d(u,w)+d(v,w)$ and every choice of $3$ shortest paths between the $3$ pairs $(u,v),(v,w), (w,u)$ are edge-disjoint, or $k \ge 4$ and $G$ is a tree.
\end{theorem}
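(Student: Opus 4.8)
The plan is to first reduce everything to a pointwise statement. For a $k$-set $S$ and an optimal Steiner tree $T$ of $S$, a depth-first traversal of $T$ is a closed walk visiting $S$ that uses every edge of $T$ exactly twice, so $\tsp_k(S)\le 2d_k(S)$; summing over $S\in\binom{V}{k}$ gives $\Wtspk(G)\le 2\W_k(G)$. Hence, for fixed $k$, equality holds iff $\tsp_k(S)=2d_k(S)$ for \emph{every} $k$-set $S$, and it suffices to characterise the graphs with this property in each regime of $k$. Two regimes are immediate. For $k=2$, $\tsp_2(u,v)=2d(u,v)=2d_2(u,v)$ for all $u,v$, so equality always holds. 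For $k\ge4$ and $G$ a tree, the Steiner tree of $S$ is the unique minimal subtree $T_S$ spanning $S$, and any closed walk through $S$ crosses each edge of $T_S$ a positive even number of times (each such edge separates $S$), so $\tsp_k(S)\ge 2d_k(S)$ and equality holds for every $S$.

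For $k=3$ I would first record, for arbitrary $u,v,w$, that $\tsp_3(u,v,w)=d(u,v)+d(v,w)+d(w,u)$ (concatenating three shortest paths gives ``$\le$''; cutting any closed walk at one occurrence of each of $u,v,w$ gives ``$\ge$'') and that $d_3(u,v,w)=\tfrac12\bigl(d(u,v)+d(v,w)+d(w,u)\bigr)$ exactly when $\{u,v,w\}$ has a \emph{median}, i.e.\ a vertex lying on a shortest path for each of the three pairs, and is strictly larger otherwise. Thus $\tsp_3(S)=2d_3(S)$ iff $S$ has a median, and the claim becomes: every triple has a median $\iff$ there is no \emph{bad} triple $(u,v,w)$, where bad means $2\max\{d(u,v),d(u,w),d(v,w)\}<d(u,v)+d(u,w)+d(v,w)$ and every choice of shortest paths for the three pairs is edge-disjoint. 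One direction is short: if $m$ is a median of $(u,v,w)$, the strict triangle inequalities force $m\notin\{u,v,w\}$ with all three legs from $m$ nonempty, so routing a shortest $u$--$v$ and a shortest $u$--$w$ path through $m$ makes them share their first leg; hence a bad triple has no median. For the converse I would pick a triple $\{u,v,w\}$ with no median minimising $d(u,v)+d(v,w)+d(w,u)$: a tight triangle inequality yields a trivial median, so all are strict; and if (say) a shortest $u$--$v$ path $P$ and a shortest $v$--$w$ path $Q$ shared an edge $e=\{x,y\}$, a triangle-inequality computation at $x$ shows $P$ and $Q$ must traverse $e$ in \emph{opposite} directions, which puts $x$ on a shortest $u$--$v$ and a shortest $v$--$w$ path but not on a shortest $u$--$w$ path; then $\{u,w,x\}$ has strictly smaller total distance, hence by minimality a median $m$, and feeding the three interval equalities for $m$ into $d(u,m)+d(m,v)\ge d(u,v)$ and $d(v,m)+d(m,w)\ge d(v,w)$ forces both to be equalities, so $m$ is a median of $\{u,v,w\}$ as well --- a contradiction. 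Thus a minimal no-median triple is bad, which settles $k=3$.

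For $k\ge4$ with $G$ not a tree I would produce a single $k$-set $S$ with $\tsp_k(S)<2d_k(S)$, using the crude bound $d_k(S)\ge k-1$ (a tree on at least $k$ vertices has at least $k-1$ edges). Let $C$ be a shortest cycle, of length $g=\mathrm{girth}(G)$, which is an isometric subgraph of $G$. If $g\le 2k-3$: take $S$ to be $k$ vertices of $C$ when $g\ge k$, and otherwise $V(C)$ together with $k-g$ further vertices chosen so that $C$ and these vertices span a connected subgraph with exactly one cycle; a tour around $C$ with short return excursions to the extra vertices then has length at most $2k-3<2(k-1)\le 2d_k(S)$. If $g\ge 2k-2$: take $S$ to be $k$ vertices of $C$ spread so that every arc between consecutive ones has length $<g/2$ (possible since $\lceil g/k\rceil<g/2$ for $k\ge4$); on an isometric cycle the cyclic order is then an optimal tour, so $\tsp_k(S)=g$, and $\tsp_k(S)\le 2d_k(S)$ gives $d_k(S)\ge\lceil g/2\rceil$, which already finishes the odd case. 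For even $g$ one improves this to $d_k(S)\ge g/2+1$: a Steiner tree with exactly $g/2$ edges would be forced, for each pair of consecutive terminals, to contain an $s$--$s'$ path of the same length as the corresponding arc of $C$ (otherwise two such paths close up into a cycle of length below $g$), hence to contain all of $C$, impossible for a tree. Either way $\tsp_k(S)=g<2d_k(S)$.

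I expect the main difficulty to be the converse for $k=3$: ruling out the same-direction edge-sharing, and then verifying that the median of $\{u,w,x\}$ is inherited by $\{u,v,w\}$ --- this ``median transfer'' via the interval equalities is the real content of the argument. The secondary delicate point is the large-girth subcase for $k\ge4$, where $d_k(S)\ge k-1$ is too weak and one must exploit that a shortest cycle is an isometric subgraph to lower-bound the Steiner distance.
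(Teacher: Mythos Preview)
Your proposal is correct and follows the same global decomposition as the paper (pointwise inequality $\tsp_k\le 2d_k$, then the regimes $k=2$, $k=3$, $k\ge 4$), but two of the case analyses are carried out along genuinely different lines. For $k=3$, the paper extracts a bad triple from an arbitrary no-median triple $(u,v,w)$ by choosing shortest paths between the three pairs so that their union is a pseudotree with the smallest possible cycle, and then reading off the three degree-$3$ vertices $(u',v',w')$ of that cycle as the bad triple; you instead pick a no-median triple of \emph{minimal total pairwise distance} and argue it is itself bad, the key step being the ``median transfer'' from the auxiliary triple $\{u,w,x\}$ back to $\{u,v,w\}$. Your route is self-contained and arguably cleaner algebraically; the paper's is more pictorial and makes the bad triple geometrically explicit. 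For $k\ge 4$ with $G$ not a tree, the paper splits at $k\le m$ versus $k>m$ ($m$ the girth) and in the latter case uses an exchange argument on a Steiner tree maximizing its overlap with the girth cycle; your split at $g\le 2k-3$ versus $g\ge 2k-2$ lets the crude bound $d_k(S)\ge k-1$ dispose of the small-girth side in one line, and your explicit even-girth argument (a Steiner tree with exactly $g/2$ edges would be forced to contain each short arc of $C$, hence all of $C$) spells out a step that the paper's ``at least $\frac{m+1}{2}$ edges'' leaves implicit.
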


This is proven in Section~\ref{sec:rel_Wk_Wtspk}.
In Section~\ref{sec:bounds_mutspk} we prove sharp lower and upper bounds for the average $k$-TSP distance, and characterize the extremal graphs.

\begin{theorem}
    For every $k \ge 2$ and graph $G$ of order $n$, $$k \le \mutspk(G) \le 2\frac{k-1}{k+1}(n+1),$$
\end{theorem}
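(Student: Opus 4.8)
The plan is to prove the two inequalities separately. The lower bound is elementary: a closed walk visiting $k$ distinct vertices is a cyclic sequence $u_0,u_1,\dots,u_m=u_0$ of some length $m$, and among $u_0,\dots,u_{m-1}$ there are at most $m$ distinct vertices, so visiting $k$ of them forces $m\ge k$. Hence $\tsp_k(v_1,\dots,v_k)\ge k$ for every $k$-set, and averaging gives $\mutspk(G)\ge k$, with equality exactly when every $k$-set of vertices lies on a common cycle of length $k$ (in particular $K_n$ is extremal, as is $C_k$ when $n=k$).

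For the upper bound I would first reduce to trees. A depth-first traversal of a Steiner tree of $S=\{v_1,\dots,v_k\}$ is a closed walk through $S$ using every tree edge exactly twice, so $\tsp_k(S)\le 2d_k(S)$; this is the inequality $\Wtspk(G)\le 2\W_k(G)$ of Theorem~\ref{thr:TSPW=2W}. Moreover, for any spanning tree $T$ of $G$ a minimum subtree of $T$ spanning $S$ is a connected subgraph of $G$ spanning $S$, so $d_k^G(S)\le d_k^T(S)$ and thus $\W_k(G)\le \W_k(T)$. Therefore
\[
\mutspk(G)=\frac{\Wtspk(G)}{\binom nk}\le \frac{2\W_k(G)}{\binom nk}\le \frac{2\W_k(T)}{\binom nk}\le \frac{2}{\binom nk}\,\max\Bigl\{\W_k(T'):T'\text{ a tree of order }n\Bigr\},
\]
so it remains to show that $P_n$ maximizes $\W_k$ among trees of order $n$ and to evaluate $\W_k(P_n)$.

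For the key lemma I would use that in a tree the Steiner distance of $S$ counts the edges $e$ both of whose sides meet $S$, whence $\W_k(T)=\sum_{e\in E(T)}\phi(a_e)$ with $\phi(a)=\binom nk-\binom ak-\binom{n-a}{k}$ and $a_e,n-a_e$ the orders of the two components of $T-e$. Since $\binom ak$ is convex in $a$, $\phi$ is concave and symmetric about $n/2$, so $c\mapsto\phi(c)$ is nondecreasing on $\{1,\dots,\lfloor n/2\rfloor\}$, where $c_e:=\min(a_e,n-a_e)$; summing by parts then reduces everything to the claim that, for each $t\ge 1$,
\[
\#\{e\in E(T):c_e\ge t\}\ \le\ \#\{e\in E(P_n):c_e\ge t\}\ =\ \max\{n-2t+1,\,0\}.
\]
To prove this, root $T$ at a centroid $z$, so every component of $T-z$ has at most $\lfloor n/2\rfloor$ vertices; then every non-root $v$ has subtree-size $s_v\le n/2$, the edge above $v$ satisfies $c_e=s_v$, and $A_t:=\{z\}\cup\{v\neq z:s_v\ge t\}$ induces a subtree with exactly $\#\{e:c_e\ge t\}$ edges. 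Each leaf $v\neq z$ of $A_t$ has $s_v\ge t$ but all of its children have subtree-size $<t$, so it accounts for at least $t-1$ vertices outside $A_t$, and distinct leaves account for disjoint such sets; if $A_t$ has at least two leaves other than $z$ this gives $|A_t|\le n-2(t-1)$. Otherwise $A_t$ is a single vertex or a path with $z$ an endpoint, and since subtree-sizes strictly decrease along the descending path from $z$ while the vertex adjacent to $z$ has subtree-size $\le n/2$, we get $|A_t|\le n/2-t+2\le n-2t+2$ (using $t\le n/2$). Either way $\#\{e:c_e\ge t\}=|A_t|-1\le n-2t+1$, as claimed.

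To finish, the edge formula for $P_n$ together with the hockey-stick identity $\sum_{a=0}^{n-1}\binom ak=\binom n{k+1}$ gives $\W_k(P_n)=(n-1)\binom nk-2\binom n{k+1}$, so $\W_k(P_n)/\binom nk=(n-1)-2\tfrac{n-k}{k+1}=\tfrac{(k-1)(n+1)}{k+1}$, and the upper bound $\mutspk(G)\le 2\tfrac{k-1}{k+1}(n+1)$ follows. For the extremal graphs, tracing equality back forces the chosen spanning tree to be $P_n$ and then $d_k^G(S)=d_k^{P_n}(S)$ for all $S$, which already fails when $G$ has a chord of this spanning path, so $P_n$ is the unique extremal graph for the upper bound. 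The routine ingredients here are the two reductions and the binomial computation; the real content — and the step I expect to be most delicate — is the key lemma that $P_n$ maximizes the Steiner $k$-Wiener index among trees of order $n$: the centroid-rooting argument above is the intended route, and getting the case analysis on the leaves of $A_t$ right (together with the small-$n$ subtleties, where $\phi$ may be constant on part of its range, for the uniqueness claim) is where care is needed.
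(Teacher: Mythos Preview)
Your proof is correct and follows the same route as the paper: the trivial lower bound $\tsp_k\ge k$, and the upper bound via $\Wtspk(G)\le 2\W_k(G)\le 2\W_k(P_n)$. The only difference is that the paper simply quotes \cite[Thr.~3]{LMG16} (equivalently \cite[Thr.~2.1]{DOS96}) for the fact that $P_n$ maximizes $\W_k$ among graphs of order $n$, whereas you supply a self-contained proof of this via the edge-contribution formula $\W_k(T)=\sum_e\phi(c_e)$ and the centroid-based count of the level sets $\{e:c_e\ge t\}$; that argument is correct. One small correction to your closing remark on extremal graphs: when $k=n$ there is only one $k$-set, namely $V$, and then $d_n(V)=n-1$ for every connected $G$ while $\tsp_n(V)=2(n-1)$ for every tree, so equality in the upper bound holds for \emph{every} tree of order $n$, not only $P_n$; the paper records this exception explicitly.
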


Finally, we also consider the analogue of the DeLaVi\~{n}a-Waller~\cite{DLVW} conjecture, which states that the cycle $C_{2d+1}$ attains the maximum average distance among all graphs with diameter $d$ and order $n=2d+1,$ for every $d \ge 3.$
We prove that this extension is false for $k \ge 4$.

\subsection{Definitions and notation}\label{subsec:def&not}

In this paper, $G=(V,E)$ will always denote a finite connected graph. Its order is denoted by $n.$
A walk in a graph $G$ is a sequence of vertices $(v_1,v_2, \ldots, v_r)$ such that $v_iv_{i+1}\in E$ for every $1\le i \le r-1.$ It is a closed walk if $v_1=v_r.$ A path is a walk in which all vertices are distinct.  
For a subset $U \subset V,$ the subgraph of $G$ induced by $U$, is denoted by $G[U].$ Here $G[U]=(U, E \cap \binom{U}{2}).$
If $U=\{v_1, v_2, \ldots, v_k\},$ we also use the notation $G[v_1,v_2,\ldots,v_k]$ for $G[U].$
The distance $d(u,v)$ between two vertices $u$ and $v$ equals the length of a shortest path containing them.
The latter is always finite and bounded by $n-1$, as the graph is connected.
The total distance or Wiener index $W(G)$ is the sum of all distances.
The average distance is the arithmetic average of the distances and equals $\mu(G)=\frac{W(G)}{\binom n2}.$

A walk in a digraph $D=(V,A)$ is defined similarly, it is a sequence of vertices $(v_1,v_2, \ldots, v_r)$ such that $\vc{v_iv_{i+1}}\in A$ for every $1\le i \le r-1.$
It is a closed walk if $v_1=v_r,$ and it is a (directed) path if all vertices are distinct.
A digraph is biconnected if for every choice of $u,v \in V$, there is a path from $u$ to $v$ (and vice versa); equivalently, its total distance is finite.
Another term for biconnected is strongly connected.

The Steiner distance $d_k(v_1,v_2, \ldots, v_k)$ of a $k$-tuple of vertices $(v_1,\ldots, v_k)$ in a graph $G$, is defined as the size of a minimum subtree that contains these $k$  vertices.

For a set of $k$ vertices, a Steiner tree in a graph $G$ is a subtree of $G$ of minimum order which contains the $k$ vertices.
A Steiner tree for the set of all vertices of a graph is a spanning tree, so the Steiner tree problem is a generalization of the spanning tree problem.
In $1989$, Chartrand et al.~\cite{COT89} defined the Steiner distance $d_k(v_1,v_2, \ldots, v_k)$ of a $k$-tuple of vertices $(v_1,\ldots, v_k)$ as the size of a minimum connected subgraph (subtree) that contains all vertices. This led to the notion of the average Steiner distance, defined in~\cite{DOS96}, and the Steiner Wiener index, defined in~\cite{LMG16}.
Surveys on Steiner distance and related Steiner distance parameters can be found in~\cite{Oellermannsurvey,MaoSurvey}.
The concept of a Steiner tree extends to digraphs, allowing for the definition of Steiner $k$-distance. The latter can be defined for a $k$-tuple $(v_1, v_2, \ldots, v_k)$ as the minimum size of a Steiner tree connecting the root $v_1$ with all terminal vertices $v_2, \ldots, v_{k-1}$, and $v_k$.
Algorithms to determine the smallest Steiner tree in this setting have been investigated as well; see, e.g.~\cite{Guo90}.
For our purpose, we prefer to define the Steiner distance $d_k(v_1,v_2, \ldots, v_k)$ in a digraph as the minimum number of arcs in a biconnected (strongly connected) subdigraph containing the vertices $(v_i)_{1 \le i \le k},$ which is the analogue of the definition used by Chartrand et al.~\cite{COT89}.
The average Steiner distance is just the average over all $k$-tuples, and the Steiner Wiener index $W_k(D)$ of a digraph is the sum over all $k$-tuples of vertices.
Variants of both the TSP and Steiner tree problem are often NP-hard, but are somewhat easier for restricted graph classes, see, e.g.~\cite{MPP18}.

The $k$-Steiner Wiener index of a graph $G$ equals $\sum_{\binom Vk} d_k(v_1,v_2, \ldots, v_k)$ and the average Steiner distance for $k$-tuples is $\mu_k(G)= \frac{W_k(G)}{ \binom{n}k}.$

The analogue of~\cref{def:Wtspk} for digraphs is the following.
\begin{definition}\label{def:Wtspk_Di}
    In a digraph $D$, we define $\tsp_k(v_1,v_2, \ldots, v_k)$ as the length of a shortest closed walk in $D$ that visits all vertices $v_1,v_2, \ldots, v_k$.\\
    The $k$-TSP Wiener index of a digraph $D$ is defined as 
    $$\Wtspk(D)= \sum_{\binom{V}{k}} \tsp_k(v_1,v_2, \ldots, v_k).$$\\
    The average $k$-TSP distance equals 
    $$\mutspk(D)=\frac{\Wtspk(D)}{\binom{n}{k}}.$$
\end{definition}

For two functions $f$ and $g$, we write $f(x)=o(g(x))$ if $\lim_{ x \to \infty} \frac{ f(x)}{g(x)}=0$. This can also be written as $g(x) \gg f(x).$
We use $f \sim g$ if $\lim_{ x \to \infty} \frac{ f(x)}{g(x)}=1$, or equivalently $f(x)=(1-o(1))g(x).$

\section{Relations between the Steiner Wiener and TSP Wiener index}\label{sec:rel_Wk_Wtspk}

In this section, we prove Theorem~\ref{thr:TSPW=2W}, hereby starting with the inequality itself.

\begin{proposition}\label{prop:TSPWle2W}
    For a graph $G$ and integer $k\ge 2$, we have $\Wtspk(G)\le 2\W_k(G).$ 
\end{proposition}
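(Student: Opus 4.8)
The plan is to relate any TSP walk to a Steiner tree by the standard doubling argument, carried out at the level of Wiener indices rather than individual vertex sets. Fix an integer $k \ge 2$ and a $k$-subset $S = \{v_1, \ldots, v_k\}$ of $V(G)$. I would first recall the well-known fact that if $T$ is a Steiner tree for $S$, then the closed walk obtained by traversing a depth-first (Euler) tour of the "doubled" tree $2T$ (i.e.\ the multigraph in which every edge of $T$ is taken twice) visits every vertex of $T$, in particular every vertex of $S$, and has length exactly $2|E(T)| = 2 d_k(v_1, \ldots, v_k)$. Since $\tsp_k(v_1, \ldots, v_k)$ is the length of a \emph{shortest} closed walk visiting $S$, this immediately yields the pointwise inequality $\tsp_k(v_1, \ldots, v_k) \le 2 d_k(v_1, \ldots, v_k)$ for every $k$-subset $S$.

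Summing this pointwise bound over all $\binom{V}{k}$ choices of $S$ gives
\[
\Wtspk(G) = \sum_{\binom{V}{k}} \tsp_k(v_1, \ldots, v_k) \le \sum_{\binom{V}{k}} 2\, d_k(v_1, \ldots, v_k) = 2\, \W_k(G),
\]
which is exactly the claimed inequality. So the whole proposition reduces to the single pointwise statement $\tsp_k(S) \le 2 d_k(S)$, and the rest is just linearity of the sum.

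The only genuine point requiring care — the main (minor) obstacle — is making the Euler-tour argument fully rigorous as a statement about \emph{walks in $G$} rather than abstract multigraph tours: one should note that a closed walk in the doubled tree $2T$ corresponds verbatim to a closed walk in $G$ (each edge of $T$ is an edge of $G$, traversable in either direction), that such an Euler tour exists because every vertex of the connected multigraph $2T$ has even degree, and that this tour indeed passes through all vertices of $T \supseteq S$ while using each of the $|E(T)|$ edges exactly twice, for total length $2|E(T)|$. I would also remark that $d_k(S)$ here denotes the \emph{size} (number of edges) of a minimum Steiner tree, consistent with the definition of $\W_k$ recalled in the introduction, so that $2|E(T)| = 2 d_k(S)$ when $T$ is chosen minimum. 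With these observations the proof is complete; the characterization of equality is deferred to the subsequent analysis of Theorem~\ref{thr:TSPW=2W}.
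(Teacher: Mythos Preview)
Your proof is correct and follows essentially the same approach as the paper: establish the pointwise inequality $\tsp_k(S)\le 2d_k(S)$ by walking around a minimum Steiner tree so that each edge is used twice, then sum over all $k$-subsets. The only cosmetic difference is that you justify the existence of such a walk via an Euler tour of the doubled tree, whereas the paper phrases it as an induction on the size of the tree using a cut-vertex.
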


\begin{proof}
    Given a minimum Steiner tree spanning $k$ vertices $v_1, v_2, \ldots, v_k,$ one can make a closed walk along this Steiner tree visiting every edge precisely twice. The latter follows from induction on the size of a tree and taking a cut-vertex in the induction step.
    This implies that $\tsp_k(v_1, v_2, \ldots, v_k) \le 2d_k(v_1, v_2, \ldots, v_k).$ Summing over $\binom{V}{k},$ we obtain the result.
\end{proof}

The characterization of the graphs that attain equality is harder, except for the case $k=2$. When $k=2,$ a shortest closed walk visiting $v_1$ and $v_2$ will always have length $2d(v_1,v_2).$
Before going on with the characterization for $k \ge 3$, we prove the following theorem relating the $k$-TSP Wiener index with the Wiener index.

\begin{theorem}\label{thr:wtspk_vs_W}
    For every graph $G$ of order $n \ge k$, we have 
    $\Wtspk(G) \le k \binom{n}{k}\mu(G)= \frac{2}{k-1} \binom{n-2}{k-2} W(G).$
    Equality holds if and only if $k \in \{2,3\}$, or $k>3$ and $G$ equals $S_n$ or $K_n$.
\end{theorem}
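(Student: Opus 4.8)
The plan is to establish the inequality $\Wtspk(G) \le k\binom{n}{k}\mu(G)$ via a clean averaging/counting argument, and then separately analyze equality. For the inequality, I would first record the identity $k\binom{n}{k}\mu(G) = \frac{2}{k-1}\binom{n-2}{k-2}W(G)$, which is pure bookkeeping: $\mu(G) = W(G)/\binom{n}{2}$ by definition of the average distance, and $k\binom{n}{k}/\binom{n}{2} = \frac{2}{k-1}\binom{n-2}{k-2}$ after cancelling factorials. So it suffices to prove $\Wtspk(G) \le \frac{2}{k-1}\binom{n-2}{k-2}W(G)$. The natural route is: for any $k$-set $S = \{v_1,\dots,v_k\}$, pick a shortest closed walk $\mathcal{W}$ realizing $\tsp_k(S)$; traversing it visits the $k$ vertices in some cyclic order, say $v_{\sigma(1)}, v_{\sigma(2)}, \dots, v_{\sigma(k)}, v_{\sigma(1)}$, and its length is at least $\sum_{i=1}^k d(v_{\sigma(i)}, v_{\sigma(i+1)})$ (indices mod $k$) — actually the closed walk has length \emph{exactly} a sum of distances between consecutive visited vertices for the optimal order, and certainly $\tsp_k(S) = \min_{\text{cyclic orders }\tau} \sum_i d(v_{\tau(i)}, v_{\tau(i+1)})$. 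Hence $\tsp_k(S)$ is at most the \emph{average} over all $(k-1)!/2$ cyclic orders of $\sum_i d(v_{\tau(i)},v_{\tau(i+1)})$, which equals $\frac{k}{\binom{k}{2}}\sum_{\{x,y\}\subseteq S} d(x,y) = \frac{2}{k-1}\sum_{\{x,y\}\subseteq S} d(x,y)$. Summing this bound over all $S \in \binom{V}{k}$ and swapping the order of summation, each pair $\{x,y\}$ is counted $\binom{n-2}{k-2}$ times, giving exactly $\Wtspk(G) \le \frac{2}{k-1}\binom{n-2}{k-2}W(G)$, as desired.

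For the equality analysis, note that equality forces, for \emph{every} $k$-set $S$, that $\tsp_k(S)$ equals the average of $\sum_i d(v_{\tau(i)},v_{\tau(i+1)})$ over all cyclic orders $\tau$ — i.e., every cyclic order is optimal and achievable by a closed walk of exactly that length. When $k=2$ this is vacuous (a single pair, walk of length $2d(u,v)$), and when $k=3$ every cyclic order of three vertices gives the same sum $d(u,v)+d(v,w)+d(w,u)$, and a closed walk of this length always exists (concatenate shortest $u$--$v$, $v$--$w$, $w$--$u$ paths), so equality always holds for $k\in\{2,3\}$; this matches the claim and I would state it as the easy direction. For $k \ge 4$ the condition "all cyclic orders of $S$ have equal weight $\sum_i d(v_{\tau(i)},v_{\tau(i+1)})$" is genuinely restrictive: applying it to a fixed $4$-subset $\{a,b,c,d\}$ forces $d(a,b)+d(c,d) = d(a,c)+d(b,d) = d(a,d)+d(b,c)$ (the three perfect-matching sums of a $4$-set coincide). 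This "four-point condition with equality" (an $\ell_1$-type degeneracy) must hold for all quadruples, and I would show it pins the metric down to being the metric of $S_n$ (the star) or of $K_n$ — in both cases all pairwise distances among any $4$ vertices that are not all equal to $1$ must satisfy the rigid relation, and one checks these are the only graphs of order $n$ whose shortest-path metric has this property (the star has all distances in $\{1,2\}$ with a center, $K_n$ has all distances $1$). Conversely one verifies $S_n$ and $K_n$ actually attain equality: for $K_n$, $\tsp_k(S) = k$ and $\frac{2}{k-1}\sum_{\{x,y\}} d(x,y) = \frac{2}{k-1}\binom{k}{2} = k$; for $S_n$ with $S$ a $k$-set, if $S$ avoids the center then $\tsp_k(S)=2k$ and each pairwise distance is $2$ so the bound is $\frac{2}{k-1}\binom{k}{2}\cdot 2 = 2k$, while if $S$ contains the center a quick check again gives equality.

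The main obstacle, I expect, is the equality characterization for $k \ge 4$: proving that the four-point equality condition forces $G \in \{S_n, K_n\}$. The condition $d(a,b)+d(c,d)=d(a,c)+d(b,d)=d(a,d)+d(b,c)$ for all $4$-sets is the statement that the metric embeds isometrically into $\ell_1$ with an especially degenerate structure; I would argue by taking a pair $u,v$ at maximum distance $D=\diam(G)$ and a third vertex $w$, and deriving from the equality on triples-plus-one that either $D \le 2$ and there is a universal "center", or $D=1$. Handling the logic that distinguishes $S_n$ from $K_n$ (and rules out, e.g., double stars, paths $P_4$, or $C_4$, all of which fail the four-point equality — e.g. in $P_4 = a b c d$ one has $d(a,b)+d(c,d)=2 \ne 4 = d(a,d)+d(b,c)$) is where the careful case work lives. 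A secondary subtlety is making sure the walk-length lower bound $\tsp_k(S) = \min_\tau \sum_i d(v_{\tau(i)},v_{\tau(i+1)})$ is justified cleanly (any closed walk through $S$ restricts to a closed walk through $S$ visiting them in some cyclic order, and between consecutive visits the walk has length $\ge$ the distance; conversely concatenating shortest paths in any order gives such a walk), but that is routine.
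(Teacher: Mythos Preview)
Your averaging argument for the inequality and your treatment of the cases $k\in\{2,3\}$ and of $K_n$, $S_n$ match the paper's proof essentially verbatim (the paper averages over all $k!$ permutations rather than over the $(k-1)!/2$ cyclic orders, but this is cosmetic).

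Where you diverge is the equality characterization for $k\ge 4$. You correctly extract the four--point condition
\[
d(a,b)+d(c,d)=d(a,c)+d(b,d)=d(a,d)+d(b,c)\quad\text{for all }\{a,b,c,d\}\in\binom{V}{4},
\]
but then propose to attack it metrically via the diameter and an $\ell_1$--degeneracy argument, anticipating ``careful case work''. The paper avoids all of this with a one--line graph--theoretic consequence of the four--point condition: if $ab$ and $cd$ are two \emph{disjoint edges} of $G$, then $d(a,b)+d(c,d)=2$, so the other two matching sums are also $2$, forcing $d(a,c)=d(b,d)=d(a,d)=d(b,c)=1$; hence \emph{any two disjoint edges span a $K_4$}. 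Now the finish is immediate: a connected graph on $n\ge 4$ vertices that is not a star contains two disjoint edges, so $G\supseteq K_4$; let $K_m$ be a maximum clique. If $G\neq K_n$ there is a vertex $u$ with $u\sim v_1$ and $u\not\sim v_3$ for some $v_1,v_3\in K_m$; picking $v_2\in K_m\setminus\{v_1,v_3\}$ (possible since $m\ge 4$), the disjoint edges $uv_1$ and $v_2v_3$ span a $K_4$, whence $u\sim v_3$ --- contradiction.

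So your plan is not wrong, but the obstacle you flag as the crux dissolves once you read the four--point condition on \emph{edges} rather than on arbitrary distances; the paper's argument is a couple of lines where you expected a case analysis.
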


\begin{proof}
    Let $U=\{v_1, v_2 , \ldots, v_k\}$ be a set of $k$ vertices.
    Once the order $(v_1,v_2, \ldots, v_k)$ of vertices visited in the closed walk is determined, we have 
    $\tsp_k(v_1,v_2, \ldots, v_k)=d(v_1,v_2)+d(v_2,v_3)+\ldots+d(v_{k-1},v_k)+d(v_k,v_1).$
    Let $f(U)=\Exp \sum_{i=1}^k d(v_i,v_{i+1})$ be the average over all $k!$ possible permutations of the $k$ vertices (where we take $v_{k+1}=v_1$ in the sum).
    Then $\tsp_k(v_1,v_2, \ldots, v_k) \le \Exp \sum_{i=1}^k d(v_i,v_{i+1})=f(U)$
    and thus 
    $$
        \Wtspk(G)\le \sum_{U \in \binom{V}{k}}  f(U)= k \binom{n}{k}\mu(G).
    $$
    For $k \in \{2,3\},$ every permutation results in the same sum and so the equality is true for every graph.
    
    For $K_n$, we trivially have $\Wtspk(K_n)=k\binom{n}{k}$ and for a star one easily verifies that
    $$\Wtspk(S_n)=2k \binom{n-1}{k}+ 2(k-1) \binom{n-1}{k-1} = \frac{2k(n-1)}{n}\binom{n}{k}=k \binom{n}{k}\mu(S_n).$$
    
    Now assume that $n \ge k \ge 4$ and $G$ is not a star nor a clique. This implies that $G$ has a spanning tree which is not a star and in particular that $G$ contains two disjoint edges (this is also a consequence of Erd\H{o}s-Ko-Rado results).
    
    Equality did only occur if all permutations gave the same sum, so in particular for every $4$ vertices $v_1,v_2,v_3,v_4$ we have $d(v_1,v_2)+d(v_2,v_3)+d(v_3,v_4)=d(v_1,v_3)+d(v_2,v_3)+d(v_2,v_4),$ which is equivalent with $d(v_1,v_2)+d(v_3,v_4)=d(v_1,v_3)+d(v_2,v_4).$
    By the same argument, the latter also equals $d(v_1,v_4)+d(v_2,v_3).$
    So any two disjoint edges span a $K_4$.
    In particular $G$ contains a $K_4$. Let $K_m$ ($m \ge 4$) be a largest clique in $G$, and let $(v_i)_{1 \le i \le m}$ be its vertices.
    Since $G$ is not a clique itself and connected, there is a vertex $u$ adjacent to some vertices of the clique, but not all of them.
    Withous loss of generality, we assume that $u$ is adjacent to $v_1$, but not adjacent to $v_3$. That is, $uv_1 \in E(G)$ and $uv_3 \not \in E(G).$
    Since $uv_1$ and $v_2v_3$ are disjoint edges in $G$, they span a $K_4$ (by the earlier observation) and so $uv_3 \in E(G)$, which is a contradiction with our assumptions. This contradiction implies that no other graph than $S_n$ and $K_n$ can attain equality.
\end{proof}

    As such, for $k=3,$ we have $\Wtsp3(G)=(n-2)W(G)$ for every graph $G$.
    The characterization of the graphs for which $2W_3(G)=\Wtsp3(G)$ can now be formulated and proven as follows. 
    (The subsequent proof also contains an alternative proof for the inequality $2W_3(G)\geq \Wtsp3(G)$ which the reader might skip.)
    
\begin{theorem}
    Every graph $G$ satisfies $W_3(G) \ge \frac{n-2}{2} W(G).$
    Equality holds if and only if $G$ contains no vertices $u,v,w$ for which $2\max\{d(u,v),d(u,w),d(v,w)\}<d(u,v)+d(u,w)+d(v,w)$ and every choice of $3$ shortest paths between the $3$ pairs $(u,v),(v,w), (w,u)$ are edge-disjoint.
\end{theorem}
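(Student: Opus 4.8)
The plan is to prove the equivalent statement $\Wtsp3(G) \le 2W_3(G)$ with the stated equality characterization, exploiting the identity $\Wtsp3(G)=(n-2)W(G)$ from the previous theorem (which makes the left side explicit) together with the known lower bound on $W_3(G)$. So the real content is to re-derive the inequality $W_3(G)\ge \frac{n-2}{2}W(G)$ directly and track exactly when it is tight. The natural device is a local/edge-counting argument: for a fixed triple $\{u,v,w\}$, compare $2\,d_3(u,v,w)$ with $d(u,v)+d(v,w)+d(w,u)$. A minimum Steiner tree $T$ on three terminals is a subtree in which the three terminals are leaves (or coincide with the branch vertex), so it has a unique Steiner point $x$ (possibly equal to one of the terminals) with $d_3(u,v,w)=d(u,x)+d(v,x)+d(w,x)$ along internally disjoint paths. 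Doubling each of these three paths gives three walks realizing $d(u,v)\le d(u,x)+d(x,v)$ etc., and summing yields $d(u,v)+d(v,w)+d(w,u)\le 2\,d_3(u,v,w)$, which after summing over all triples is exactly $\Wtsp3(G)\le 2W_3(G)$, i.e. $(n-2)W(G)\le 2W_3(G)$. Wait — I should instead go the other direction to get the claimed inequality, so I record: the triangle-type inequality $d_3(u,v,w)\le \tfrac12\big(d(u,v)+d(v,w)+d(w,u)\big)$ need not hold; rather one has $d_3(u,v,w)\ge \tfrac12\big(d(u,v)+d(v,w)+d(w,u)\big)$ because any Steiner tree contains a $u$–$v$ walk, a $v$–$w$ walk and a $w$–$u$ walk whose edge-multiset uses each edge at most twice. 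Summing this over all $\binom{n}{3}$ triples and noting each pair $(a,b)$ occurs in $n-2$ triples gives $W_3(G)\ge \frac12(n-2)W(G)$.

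The equality analysis is then the heart of the argument, and I expect it to be the main obstacle. Equality in $W_3\ge\frac{n-2}{2}W$ forces equality in the local inequality for every triple $\{u,v,w\}$. I would unpack what "$d_3(u,v,w)=\tfrac12(d(u,v)+d(v,w)+d(w,u))$'' means: first, for a minimum Steiner tree $T$ on $\{u,v,w\}$ with Steiner point $x$, the three paths $P_{ux},P_{vx},P_{wx}$ must each be shortest paths and the doubled tree must decompose exactly into a shortest $u$–$v$ walk, a shortest $v$–$w$ walk, and a shortest $w$–$u$ walk — so every edge of $T$ is used exactly twice overall, equivalently the three geodesics $ux\!\cup\!xv$, $vx\!\cup\!xw$, $wx\!\cup\!xu$ pairwise share only the "half'' of $T$ they are supposed to share. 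Disentangling this should show (i) $d(u,v)+d(v,w)=d(u,x)+2d(v,x)+d(w,x)$ etc., forcing $d(u,x)+d(v,x)=d(u,v)$, $d(v,x)+d(w,x)=d(v,w)$, $d(w,x)+d(u,x)=d(w,u)$, hence $2\max\{d(u,v),d(v,w),d(w,u)\}\le d(u,v)+d(v,w)+d(w,u)$ (the "no obtuse triple'' condition, since the strict inequality version is excluded); and (ii) that any three geodesics on the pairs can be routed through a common point $x$, which combined with minimality of $T$ forces edge-disjointness of every such choice. Conversely, I would check that the two stated conditions — no triple with $2\max > \sum$, and edge-disjointness of every choice of three pairwise geodesics — let one build, for each triple, a Steiner tree of size exactly $\tfrac12(d(u,v)+d(v,w)+d(w,u))$ by taking the union of three edge-disjoint geodesics and observing it is connected and acyclic of the right size (here the "no obtuse'' condition guarantees the three geodesics meet, and edge-disjointness plus a parity/size count guarantees the union is a tree).

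I would organize the write-up as: (1) the local inequality $2d_3\ge d(u,v)+d(v,w)+d(w,u)$ with its one-line doubling proof; (2) summation to get the global inequality; (3) "equality $\Rightarrow$ conditions'', analyzing the Steiner point and the edge-usage pattern in a tight triple; (4) "conditions $\Rightarrow$ equality'', constructing the extremal Steiner tree triple by triple from three edge-disjoint geodesics through a common meeting vertex. The subtle point I would be most careful about is step (3): a priori a minimum Steiner tree on three terminals might have its Steiner point coincide with a terminal, or the three constituent geodesics might overlap in unexpected ways, so I would phrase everything in terms of edge-multiplicities in the doubled tree and argue that tightness forces every edge multiplicity to be exactly $2$, which then yields both the metric condition and the geodesic edge-disjointness condition cleanly; the converse construction should then essentially reverse this bookkeeping.
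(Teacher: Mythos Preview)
Your argument for the inequality itself is correct and essentially the paper's: the local bound $2d_3(u,v,w)\ge d(u,v)+d(v,w)+d(w,u)$ holds because in any Steiner tree on $\{u,v,w\}$ the three in-tree pair-paths together use each edge at most twice, and summing over triples gives the global statement.

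The equality analysis, however, is built on a misreading of the characterization. The theorem says equality holds iff there is \emph{no} triple $(u,v,w)$ satisfying \emph{both} (A) $2\max<d(u,v)+d(v,w)+d(w,u)$ \emph{and} (B) every choice of three pairwise geodesics is edge-disjoint. Equivalently: for every triple, either $2\max=\sum$, or some choice of three pairwise geodesics is \emph{not} edge-disjoint. You have parsed this as two separate global conditions and inverted (B): in your step~(ii) you try to deduce that tightness ``forces edge-disjointness of every such choice'', and in step~(4) you assume edge-disjointness of the three pairwise geodesics in order to build a Steiner tree of size $\tfrac12\sum$ from their union. But three edge-disjoint pairwise geodesics have total size $\sum$ and form a closed walk, hence contain a cycle; their union is never a tree of size $\tfrac12\sum$. (Also, ``no triple with $2\max>\sum$'' is vacuous by the ordinary triangle inequality, so it cannot be one of the intended conditions.)

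What actually happens in a tight triple is the opposite of what you wrote: either the minimum Steiner tree is a path (so $2\max=\sum$ and (A) fails), or it has a branch vertex $x$, and then $u\!-\!x\!-\!v$, $v\!-\!x\!-\!w$, $w\!-\!x\!-\!u$ are pairwise geodesics that share edges, so (B) fails. This gives ``equality $\Rightarrow$ no bad triple'' almost for free once the statement is read correctly. The substantive direction is the converse (contrapositive): if some triple has $2d_3>\sum$, one must exhibit a---possibly different---triple satisfying both (A) and (B). The paper does this by taking shortest paths among $u,v,w$ whose union is a pseudotree with the shortest possible cycle, and arguing that the three degree-$3$ vertices $u',v',w'$ on that cycle form the required bad triple; minimality of the cycle is precisely what forces every choice of geodesics among $u',v',w'$ to be edge-disjoint. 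Your plan contains no analogue of this construction, and without it the ``if'' direction of the characterization cannot be completed.
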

\begin{proof}
    By definition, $d_3(u,v,w)$ is the number of edges in a minimum tree $T$ spanning $u,v$ and $w$.
    Either this tree is path, in which case $d_3(u,v,w)=\frac 12 \left(d(u,v)+d(v,w)+d(u,w)\right),$ or it has a single vertex of degree $3$, call it $z$.
    In the latter case, we have $d(u,v) \le d(u,z)+d(z,v)$ by the triangle inequality and the two analogues of this.
    Hence \begin{equation}\label{eq:triangle-ineq}
        d_3(u,v,w)=d(u,z)+d(v,z)+d(w,z) \ge \frac 12 \left(d(u,v)+d(v,w)+d(u,w)\right).
    \end{equation}
    So the latter inequality is true for all triples.
    Summing over all triples in $\binom{V}{3},$ every distance $d(u,v)$ appears $n-2$ times, from which we conclude that $W_3(G) \ge \frac{n-2}{2} W(G).$
    
    If Equality holds in Inequality~\eqref{eq:triangle-ineq} for every triple of vertices $(u,v,w)$, either their Steiner tree is a path and thus $2\max\{d(u,v),d(u,w),d(v,w)\}=d(u,v)+d(u,w)+d(v,w)$, or the $3$ shortest paths between the $3$ pairs $(u,v),(v,w), (w,u)$ can be chosen such that each of them contains $2$ of the $3$ branches of the tree and as such are not edge-disjoint.
    
    Now we prove the other direction.
    Let $(u,v,w)$ be a triple of vertices in $G$ for which inequality~\eqref{eq:triangle-ineq} is strict.
    We can always choose the shortest paths between the pairs in $\{u,v,w\}$ such that their union forms a pseudotree (unicyclic graph or tree). Choose these in such a way that the cycle is as small as possible (not a tree by choice of $(u,v,w)$). This is sketched in Figure~\ref{fig:shortestpaths}.
    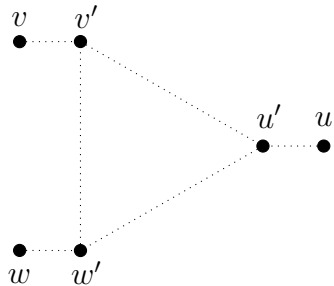
\begin{figure}[h]
    \centering
       \begin{tikzpicture}[x=0.8cm,y=0.8 cm]
    {
	\foreach \a in {0,120,240}{
	\draw[dotted] (\a+120:2) -- (\a:2);
	\draw[fill] (\a:2) circle (0.1);}
    }
    \draw[dotted] (0:3) -- (0:2);
    \draw[dotted] (-2,1.73) -- (120:2);
    \draw[dotted] (-2,-1.73) -- (240:2);
    \draw[fill] (0:3) circle (0.1);
    \draw[fill] (-2,1.73) circle (0.1);
    \draw[fill] (-2,-1.73) circle (0.1);
    \coordinate [label=center:\large \textbf{$u$}] (A) at (3,0.45);
    \coordinate [label=center:\large \textbf{$u'$}] (A) at (2.1,0.5);

    \coordinate [label=center:\large \textbf{$v'$}] (A) at (-0.9,2.15);
    \coordinate [label=center:\large \textbf{$w'$}] (A) at (-0.9,-2.1);
    \coordinate [label=center:\large \textbf{$v$}] (B) at (-2,2.1);
    \coordinate [label=center:\large \textbf{$w$}] (C) at (-2,-2.15);
	\end{tikzpicture}
    \caption{Sketch of shortest paths between the three vertices $(u,v,w)$ }
    \label{fig:shortestpaths}
\end{figure}

    Let $(u',v',w')$ be the three vertices with degree $3$ in this unicyclic graph.
    If there would exist a choice of shortest paths between these $3$ vertices that are not internally vertex-disjoint/ edge-disjoint, this would imply that we had a choice with a smaller cycle, which would be a contradiction. So they are edge-disjoint.
    Also $2\max\{d(u',v'),d(u',w'),d(v',w')\}<d(u',v')+d(u',w')+d(v',w')$ due to the triangle-inequality and the assumption that the shortest paths between them were edge-disjoint.
    \end{proof}

\begin{remark}
 Examples of graphs for which the equality $W_3(G) = \frac{n-2}{2} W(G)$ does hold are graphs all of whose induced cycles are $C_4$s (e.g. complete bipartite graphs) and convex subsets of the $n$-dimensional gridgraph.
 Non-bipartite graphs (consider a smallest odd cycle) and bipartite graphs containing an induced $C_6$ (consider the $3$ vertices of the $C_6$ in the same bipartition class) are examples of graphs for which no Equality holds.
\end{remark}

Finally, we prove the characterization of the extremal graphs in Theorem~\ref{thr:TSPW=2W} for $k \ge 4.$

\begin{proposition}\label{thr:TSPW=2W_tree}
    For a tree $T$ and integer $k\ge 4$, we have $\Wtspk(T)=2\W_k(T).$
    If $G$ is a graph that is not a tree, then $\Wtspk(G)<2\W_k(G).$
\end{proposition}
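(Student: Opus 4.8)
\emph{The first assertion (equality for trees).} Fix a tree $T$, a $k$-set $S\subseteq V(T)$, and the minimal subtree $T_S$ spanning $S$, so $d_k(S)=|E(T_S)|$. The inequality $\tsp_k(S)\le 2d_k(S)$ is Proposition~\ref{prop:TSPWle2W}. For the reverse, note that a closed walk in $T$ uses only edges of $T$ and crosses each edge an even number of times; if $e\in E(T_S)$ then both components of $T-e$ meet $S$, so the walk crosses $e$ at least twice. Hence every closed walk through $S$ has length at least $2|E(T_S)|=2d_k(S)$, so $\tsp_k(S)=2d_k(S)$, and summing over all $k$-sets gives $\Wtspk(T)=2\W_k(T)$. (This argument in fact works for every $k\ge 2$.)

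\emph{The second assertion (strict inequality otherwise).} Since $\Wtspk(G)\le 2\W_k(G)$ always, it suffices to exhibit one $k$-set $S$ with $\tsp_k(S)<2d_k(S)$. Let $g\ge 3$ be the girth of $G$ and fix a shortest cycle $C$. Suppose first that $g\le k$. Starting from a spanning path of $C$ (that is, $C$ minus an edge), add $k-g$ further vertices one at a time, each joined to the current set by a single edge; since $G$ is connected and $n\ge k$ this is possible, and it produces a subtree $T_0$ of $G$ on exactly the $k$ vertices of $S$ with $|E(T_0)|=k-1$, so $d_k(S)=k-1$. Removing from $T_0$ the edges of the spanning path of $C$ leaves a forest with $g$ components, each containing exactly one vertex of $C$ (two vertices of $C$ in one component would close a cycle in $T_0$); thus $T_0$ is that spanning path of $C$ together with subtrees $F_u$ hanging off the vertices $u\in V(C)$, with $\sum_u|E(F_u)|=k-g$. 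Traversing $C$ once and, at each $u$, inserting a closed walk through $F_u$ that uses every edge of $F_u$ twice, yields a closed walk through $S$ of length $g+2(k-g)=2k-g\le 2k-3<2k-2=2d_k(S)$.

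Now suppose $g\ge k+1$. Pick $S$ to consist of $k$ vertices of $C$ placed as evenly as possible, so that the longest arc $a_1$ of $C$ between two cyclically consecutive vertices of $S$ satisfies $a_1\le \lceil g/k\rceil<g/2$, the last inequality holding because $k\ge 4$ and $g\ge k+1$. Going once around $C$ shows $\tsp_k(S)\le g$. For the Steiner side, let $T$ be any Steiner tree of $S$; being a tree, $T$ cannot contain all $k$ of the arcs of $C$ joining consecutive vertices of $S$ (their union is $C$), so some such arc $A$ with $|E(A)|\le a_1$ is not contained in $T$. The unique $T$-path between the two ends of $A$ is distinct from $A$ (else $A\subseteq T$), so their union contains a cycle; that cycle has length at least $g$ and uses at most $|E(A)|\le a_1$ edges of $A$, hence at least $g-a_1$ edges of $T$. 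Therefore $d_k(S)\ge g-a_1$, so $2d_k(S)\ge 2(g-a_1)>g\ge\tsp_k(S)$.

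In both cases $\tsp_k(S)<2d_k(S)$, so summing over all $k$-sets gives $\Wtspk(G)<2\W_k(G)$. The first assertion and the parity bookkeeping are routine; I expect the delicate points to be, for $g\le k$, the structural claim that the non-cyclic part of $T_0$ splits into subtrees attached at distinct vertices of $C$ (so that the detours fit into a single traversal of $C$) together with the resulting estimate $\tsp_k(S)\le 2k-g$, and, for $g\ge k+1$, verifying $\lceil g/k\rceil<g/2$ throughout the relevant range and making precise the step that a tree cannot contain every gap-arc and therefore spans a cycle of length at least $g$.
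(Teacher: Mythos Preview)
Your proof is correct. The tree case and the large-girth case ($g\ge k+1$, corresponding to the paper's $k\le m$) follow the same line as the paper: place $k$ vertices evenly on a shortest cycle and use minimality of $C$ to force any Steiner tree to be long. You make the lower bound $d_k(S)\ge g-a_1$ explicit via the ``missing arc plus $T$-path contains a cycle of length $\ge g$'' step, whereas the paper simply asserts that the Steiner tree has at least $(m+1)/2$ edges; the content is the same.

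The genuine difference is the small-girth case $g\le k$ (the paper's $k>m$). The paper fixes $S=V(C)\cup\{\text{any }k-m\text{ others}\}$, takes a \emph{minimum} Steiner tree $T'$ maximising $|E(T')\cap E(C)|$, and runs an edge-exchange argument to show $T'$ contains all of $E(C)$ but one; the walk is then built from $T'$. You instead grow $S$ greedily from $V(C)$ so that $S$ itself induces a subtree $T_0$ of $G$, which pins down $d_k(S)=k-1$ immediately and lets you read off a walk of length $g+2(k-g)=2k-g<2(k-1)$ without ever analysing a minimum Steiner tree. Your route is more elementary and self-contained; the paper's route is a bit more work but has the virtue of not needing $S$ to be specially chosen, so it would adapt more readily if one wanted to show the inequality is strict for \emph{every} $k$-set containing $V(C)$ rather than just one.
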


\begin{proof}
    Since the vertices visited in a shortest closed walk on a tree, form a tree itself and every edge is used exactly twice, we have $\tsp_k(v_1,v_2, \ldots, v_k)=2d_k(v_1,v_2, \ldots, v_k)$ for every $k$-tuple, by summing over $\binom{V}k$ we conclude that $\Wtspk(T)=2\W_k(T).$
    
    Now assume $G$ is not a tree and it has girth $m$, i.e. there is a smallest cycle $C_m$ belonging to $G$.
    If $k \le m,$ one can distribute the points over the cycle in such a way that they do not belong to an interval of length smaller than $\frac m2$ (here we use $k \ge 4$).
    Since the cycle is chosen minimal, this also implies that a minimum Steiner tree will contain at least $\frac {m+1}{2}$ edges. Hence $2d_k(v_1,v_2, \ldots, v_k)\ge m+1.$
    On the other hand one can walk along $C_m$, i.e. put the vertices in the order of the cycle, and thus $\tsp_k(v_1,v_2, \ldots, v_k)\le m.$
    
    If $k>m$, one can take a set of $k$ vertices $v_1$ up to $v_k$, containing the $m$ vertices on the cycle $C_m$ and $k-m$ others (iteratively add a neighbour) in such a way that there is a spanning unicyclic subgraph $H$ in $G[v_1, v_2, \ldots, v_k]$ containing $C_m.$
    We note that $d_k(v_1,v_2, \ldots, v_k)\ge k-1$ in general and in this case, equality holds.    
    Since there is a closed walk, traversing every edge of $H$ twice, except from the edges of $C_m$ which are traversed only once, we have $$\tsp_k(v_1,v_2, \ldots, v_k)\le 2k-m\le 2k-3<2(k-1)=2d_k(v_1,v_2, \ldots, v_k)$$
    and the inequality here is strict, implying $\Wtspk(G)<2\W_k(G).$
\end{proof}

We also prove a relation between the Steiner Wiener and TSP Wiener index for digraphs.

\begin{proposition}\label{prop:TSPWle2W}
    For a digraph $D$ and integer $k\ge 2$, we have $\Wtspk(D)\ge \W_k(D).$ 
\end{proposition}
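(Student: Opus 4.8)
The plan is to establish the inequality termwise and then sum. Since both indices are sums over $\binom{V}{k}$, namely $\Wtspk(D)=\sum_{\binom{V}{k}}\tsp_k(v_1,\ldots,v_k)$ and $\W_k(D)=\sum_{\binom{V}{k}}d_k(v_1,\ldots,v_k)$, it suffices to prove the pointwise bound
\[
\tsp_k(v_1,\ldots,v_k)\ge d_k(v_1,\ldots,v_k)
\]
for every $k$-subset $\{v_1,\ldots,v_k\}$; summing over all $k$-subsets then yields $\Wtspk(D)\ge\W_k(D)$. Note the direction of the inequality is genuinely reversed from the undirected case: there one doubles a Steiner tree to obtain a tour, whereas here a closed walk is \emph{itself} a feasible strongly connected structure, so the tour cannot be cheaper than the minimum such structure.

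For the pointwise bound I would start from a shortest closed walk $W$ visiting $v_1,\ldots,v_k$, so that its length equals $\tsp_k(v_1,\ldots,v_k)$. Let $H$ be the subdigraph of $D$ whose vertex set is the set of vertices visited by $W$ and whose arc set is the set of \emph{distinct} arcs traversed by $W$; clearly $H$ contains $v_1,\ldots,v_k$. The key step is to observe that $H$ is strongly connected, hence is one of the subdigraphs competing in the definition of $d_k$. Indeed, since $W$ is \emph{closed}, for any two vertices $x,y$ appearing on $W$ one can travel from $x$ to $y$ by following $W$ in the forward direction, wrapping past the closing point if necessary, and this journey uses only arcs of $H$; thus every vertex of $H$ reaches every other vertex inside $H$.

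Because $H$ is a strongly connected subdigraph containing all $k$ vertices, its number of arcs is at least $d_k(v_1,\ldots,v_k)$. On the other hand, the length of $W$ counts arcs with multiplicity and so is at least the number of distinct arcs $\lvert A(H)\rvert$. Combining, $\tsp_k(v_1,\ldots,v_k)=\mathrm{length}(W)\ge\lvert A(H)\rvert\ge d_k(v_1,\ldots,v_k)$, as required. I do not expect a serious obstacle here; the only points needing care are the wrap-around reachability argument certifying that the arc set of a closed walk is strongly connected, and the elementary bookkeeping that the walk length (with multiplicity) dominates the count of distinct arcs used.
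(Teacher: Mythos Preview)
Your proof is correct and follows essentially the same argument as the paper: take a shortest closed walk through the $k$ vertices, observe that the distinct arcs it uses form a biconnected (strongly connected) subdigraph containing all $v_i$, so its number of arcs---and a fortiori the walk length---is at least $d_k(v_1,\ldots,v_k)$, and then sum over $\binom{V}{k}$. You are simply more explicit about the wrap-around reachability and the multiplicity bookkeeping, but the approach is identical.
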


\begin{proof}
    Let $\{v_1,\ldots,v_k\}$ be a set with $k$ vertices. 
    Take a closed walk in $D$ visiting these $k$ vertices. This results in a biconnected digraph, i.e. the vertices and arcs visited in the closed walk form a biconnected digraph.
    This implies that $d_k(v_1,\ldots,v_k) \le \tsp_k(v_1,\ldots,v_k).$ Summing over all possible $k$-vertex sets in $\binom{V}{k}$, one concludes that $\Wtspk(D)\ge \W_k(D).$ 
\end{proof}

In contrast to Theorem~\ref{thr:TSPW=2W}, the relation between $\Wtspk(D)$ and $\W_k(D)$ already depends on the digraph when $k=2.$
For this, consider the digraph $DP_{n,d}$, which is the directed cycle $\vc{C_d}$ with a blow-up of one vertex by an independent set $\{\ell_1,\ell_2,\ldots,\ell_{n-d+1}\}$ of size $n -d + 1$, presented in Figure~\ref{fig:DPnd} (or~\cite[Fig.3]{SC19}).
We note that the shortest closed walk that visits both $\ell_1$ and $\ell_{2}$ will have length $2d$, while the minimum number of arcs in a biconnected digraph containing both $\ell_1$ and $\ell_{2}$ is $d+2.$ 
For $k$ of these vertices $\ell_i$, we note that $d_k(\ell_1,\ell_2, \ldots, \ell_k) = d-2+2k$ and $\tsp_k(\ell_1,\ell_2, \ldots, \ell_k)=dk$.
One can observe that when $k=o(d)$ (that is, $d\gg k$), the ratio $\frac{ \tsp_k(\ell_1,\ell_2, \ldots, \ell_k) }{d_k(\ell_1,\ell_2, \ldots, \ell_k)} =\frac{dk}{ d+2k-2} \sim k$.
When also $d=o(n)$, most ($100(1-o(1))\ \%$) of the $k$-tuples will only contain vertices of the independent set of size $n-d+1$, since $\frac{\binom{n}{k}} {\binom{n-d+1}{k}} =1-o(1)$, and hence
$\Wtspk(D)\sim k \W_k(D).$ 
On the other hand, $\Wtspk(D)\le k \W_k(D)$ is a trivial upper bound since 
$d(v_1, \ldots, v_k) \ge \max\{d(v_i,v_j) \mid 1 \le i,j \le k\}+1$ and $\tsp_k(v_1, \ldots, v_k) \le k \cdot \max\{d(v_i,v_j) \mid 1 \le i,j \le k\}.$

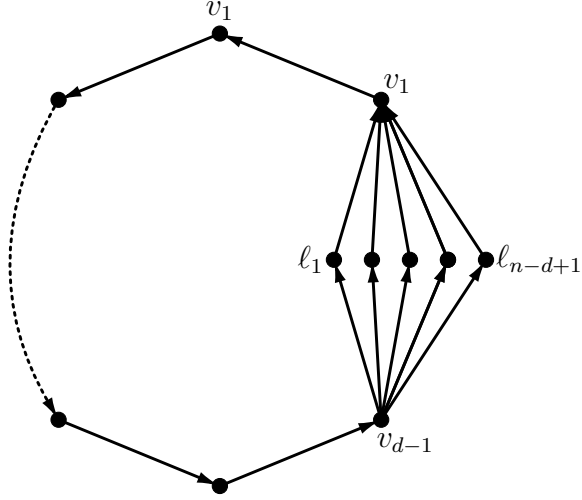
\begin{figure}[h]
	\begin{center}
			\begin{tikzpicture}[line cap=round,line join=round,>= {Latex[length=3mm, width=1.5mm]},x=1cm, y=1 cm]
			{
				\foreach \a in {0,45,90,225,270,315}{
				\draw[fill] (\a:3) circle (0.1);
			    \draw[->,line width=1.1pt] (\a:3) -- (\a+45:3);
			    }
				\foreach \x in {1.5,2,2.5,3,3.5}{
				\draw[->,line width=1.1pt] (0:\x) -- (45:3);
				\draw[->,line width=1.1pt] (-45:3)--(0:\x);
				\draw[fill] (0:\x) circle (0.1);
				}
				\draw [->,line width=1.1pt,dotted] (135:3) to [out=-120, in=120] (225:3)   ;
				\draw[fill] (135:3) circle (0.1);
				
				\coordinate [label=left:\large \textbf{$\ell_1$}] (A) at (0:1.5);
				\coordinate [label=right:\large \textbf{$\ell_{n-d+1}$}] (A) at (0:3.5);
					\coordinate [label=center:\large \textbf{$v_1$}] (A) at (45:3.3);
						\coordinate [label=center:\large \textbf{$v_2$}] (A) at (90:3.3);
                \coordinate [label=center:\large \textbf{$v_3$}] (A) at (135:3.4);
					\coordinate [label=center:\large \textbf{$v_{d-1}$}] (A) at (315:3.45);
			}
			\end{tikzpicture}
		\end{center}
    \caption{The digraph $DP_{n,d}$}
    \label{fig:DPnd}
\end{figure}

\section{Bounds on the average TSP distance}\label{sec:bounds_mutspk}

\begin{theorem}
    For every $k \ge 2$ and graph $G$ of order $n$, $$k \le \mutspk(G) \le 2\frac{k-1}{k+1}(n+1),$$
    or equivalently
    $$\Wtspk(K_n) \le \Wtspk(G) \le \Wtspk(P_n).$$
    Equality for the upper bound does hold if and only if $G$ is the path $P_n$, or $k=n$ and $G$ is a tree.
    The lower bound is true if and only if every $k$ vertices in $G$ span a $C_k$, i.e. form a Hamiltonian subset.
\end{theorem}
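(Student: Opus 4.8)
The plan is to establish the two inequalities separately and then read off the extremal graphs from Section~\ref{sec:rel_Wk_Wtspk}; throughout $G$ is assumed connected, since otherwise some $\tsp_k$-value would be infinite. For the lower bound, note that a closed walk $w_0w_1\cdots w_\ell$ with $w_\ell=w_0$ meets at most $\ell$ distinct vertices (those among $w_0,\dots,w_{\ell-1}$), so a closed walk visiting $k$ distinct vertices has $\ell\ge k$ edges; hence $\tsp_k(S)\ge k$ for every $k$-set $S$, and summing over $\binom Vk$ gives $\mutspk(G)\ge k$, while $\Wtspk(K_n)=k\binom nk$ since every $k$-set of $K_n$ induces a $K_k\supseteq C_k$. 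Equality means $\tsp_k(S)=k$ for all $S$: a closed walk with exactly $k$ edges visiting a $k$-set $S$ uses exactly $k$ distinct vertices, which are then precisely the elements of $S$ and traverse a spanning cycle of $G[S]$; conversely such a cycle gives $\tsp_k(S)=k$. Hence equality holds if and only if every $k$-subset of $V(G)$ spans a $C_k$ (read as $G=K_n$ when $k=2$).

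For the upper bound I would combine the inequality $\Wtspk(G)\le 2\W_k(G)$ from Theorem~\ref{thr:TSPW=2W} with the claim that the path maximises the Steiner--Wiener index, i.e.\ $\W_k(G)\le\W_k(P_n)$. As $P_n$ is a tree, Proposition~\ref{thr:TSPW=2W_tree} gives $\Wtspk(P_n)=2\W_k(P_n)$, so $\Wtspk(G)\le 2\W_k(G)\le 2\W_k(P_n)=\Wtspk(P_n)$. A hockey-stick computation yields $\W_k(P_n)=(n-1)\binom nk-2\binom{n}{k+1}=\frac{k-1}{k+1}(n+1)\binom nk$, which after dividing by $\binom nk$ is exactly the claimed bound $\mutspk(G)\le 2\frac{k-1}{k+1}(n+1)$.

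To see that $P_n$ maximises $\W_k$: a Steiner tree of a $k$-set inside a spanning tree $T$ of $G$ is also a Steiner tree in $G$, so $\W_k(G)\le\W_k(T)$ and it suffices to treat trees. For a tree $T$ and an edge $e$ whose removal leaves components of sizes $n_e$ and $n-n_e$, the edge $e$ lies on the Steiner tree of $S$ precisely when $S$ meets both sides, so $\W_k(T)=\sum_{e\in E(T)}\bigl(\binom nk-\binom{n_e}k-\binom{n-n_e}k\bigr)$. Writing $m_e=\min(n_e,n-n_e)$ and $h(m)=\binom mk+\binom{n-m}k$, which is non-increasing on $\{1,\dots,\lfloor n/2\rfloor\}$ by Pascal's rule, an Abel summation reduces $\W_k(T)\le\W_k(P_n)$ to the combinatorial inequality
\[
  \#\{\,e\in E(T):m_e\le j\,\}\ \ge\ 2j\qquad (1\le j\le\lfloor n/2\rfloor-1),
\]
which is an equality for $P_n$. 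This combinatorial step is the crux of the proof and the part I expect to be the main obstacle; I would prove it by rooting $T$ at a centroid $c$, so that every non-root vertex $v$ has subtree size $s(v)\le\lfloor n/2\rfloor$ and the edge above $v$ satisfies $m_e=s(v)$, then using the elementary lemma that a rooted tree on $m$ vertices has at least $\min(j,m)$ vertices of subtree size $\le j$ (a short induction on $m$), applied to the children-subtrees $T_1,\dots,T_d$ of $c$ together with a case distinction using $\sum_i\lvert T_i\rvert=n-1\ge 2j+1$ and $\lvert T_i\rvert\le\lfloor n/2\rfloor$. Alternatively one may invoke known extremal results for the Steiner--Wiener index.

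Finally, for equality in the upper bound: if $k=n$ then every connected graph has $\W_n(G)=n-1=\W_n(P_n)$, while $\Wtspk(G)=2\W_k(G)$ holds precisely when $G$ is a tree (Proposition~\ref{thr:TSPW=2W_tree} for $k\ge 4$, and directly for $n\le 3$), so equality holds if and only if $G$ is a tree. If $2\le k<n$, equality forces $\W_k(T)=\W_k(P_n)$ for every spanning tree $T$ of $G$; the equality case of the Abel summation, already at $j=1$, forces each such $T$ to have exactly two leaves, i.e.\ to be a Hamilton path of $G$, so (considering a BFS tree rooted at a vertex of maximum degree) $\Delta(G)\le 2$ and $G\in\{P_n,C_n\}$. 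Since $\W_k(C_n)<\W_k(P_n)$ for $k<n$ — for instance, with $C_n=p_1p_2\cdots p_np_1$ one has $d_k^{C_n}(\{p_1,\dots,p_{k-1},p_n\})=k-1<n-1=d_k^{P_n}(\{p_1,\dots,p_{k-1},p_n\})$ — we conclude $G=P_n$. Conversely, $P_n$ (and, when $k=n$, any tree) attains equality.
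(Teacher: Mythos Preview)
Your argument is correct and follows the same skeleton as the paper: reduce the upper bound to the Steiner--Wiener index via Theorem~\ref{thr:TSPW=2W}, then use that $P_n$ maximises $W_k$. The paper simply cites \cite[Thr.~3]{LMG16} for $W_k(G)\le W_k(P_n)$ and \cite[Thr.~2.1]{DOS96} for its equality case, whereas you supply a self-contained proof of both via the edge-contribution formula $W_k(T)=\sum_e\bigl(\binom nk-\binom{n_e}k-\binom{n-n_e}k\bigr)$, Abel summation against the convex sequence $h(m)$, and the centroid count $\#\{e:m_e\le j\}\ge 2j$. Your treatment of the equality cases (forcing every spanning tree to be a Hamilton path by reading off $j=1$, then $\Delta(G)\le 2$ via a BFS tree, then ruling out $C_n$ by exhibiting a $k$-set with strictly smaller Steiner distance) is more explicit than the paper's one-line appeal to the cited references, but the route is the same. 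The lower bound and its equality characterisation are handled identically in both.
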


\begin{proof}
    The lower bound is immediate, since $\tsp_k(v_1,v_2,\ldots,v_k) \ge k$ for every $k$-set of vertices.
    Equality holds if and only if $G[v_1,v_2,\ldots,v_k]$ contains a spanning cycle, i.e. $G[v_1,v_2,\ldots,v_k]$ is Hamiltonian, for every $k$-set $\{v_1,v_2,\ldots,v_k\} \subset V.$
    
    The upper bound is true by combining Theorem~\ref{thr:TSPW=2W} and~\cite[Thr.~3]{LMG16}, as together they imply that $\Wtspk(G) \le 2 W_k(G) \le 2 W_k(P_n)=\Wtspk(P_n).$
    Equality holds if and only if $\Wtspk(G) = 2W_k(G)$ and $\mu_k(G)=\frac{k-1}{k+1}(n+1)$ which is when $G=P_n$ or $k=n$ and $G$ is a tree, by combining Theorem~\ref{thr:TSPW=2W} and~\cite[Thr.~2.1]{DOS96}. Here we note that if $k=n=2,$ $P_2$ is the only graph on $2$ vertices and when $k=3$ $\mutsp3(K_3)<\mutsp3(P_3).$
\end{proof}

\begin{remark}
    For $k \in \{2,3\},$ the clique $K_n$ is the unique graph attaining the minimum. This is also a corollary of the result for the average distance and Theorem~\ref{thr:wtspk_vs_W}.
    For $k \ge 4,$ the condition $\delta(G) \ge n+2-k$ is necessary to ensure that $\delta(G[v_1,v_2,\ldots,v_k])\ge 2$ for every $k$-tuple, but this is only sufficient when $k \le 5.$
\end{remark}

\begin{remark}
    The analogue of~\cite[Thr.~2.2]{DOS96} does also hold for the average TSP distance for both graphs and digraphs. 
    That is, if $D$ be a biconnected, weighted digraph and $2 \le j, k$ be integers with $j+k-1 \le n.$ Then $\mu_{tsp,j+k-1}(D) \le  \mutspk(D) + \mu_{tsp,j}(D).$
    The underlying reason being that the union of two closed walks with a vertex in common can be combined into a closed walk whose (weighted) length is the sum of the other two.
\end{remark}

Knowing the extremal bounds for graphs of order $n$, without any constraint, it is natural to wonder about the extremal values and graphs for certain logical constraints.
One of these is the diameter $d$ of the graph.
Plesn\'{\i}k~\cite{P84} asked these questions for the average distance, and it turned out that determining the maximum average distance given order and diameter is a very hard question. The best estimations known at this point can be found in~\cite{SC19}.
One beautiful conjecture related to the problem of Plesn\'{\i}k is the one by DeLaVi\~{n}a and Waller~\cite{DLVW}. Here they conjecture that when the order equals $2d+1$ and the diameter $d$ is at least $3$, then the cycle $C_{2d+1}$ maximizes the average distance.
As a corollary of Theorem~\ref{thr:wtspk_vs_W}, we observe that the DeLaVi\~{n}a-Waller conjecture is true for the TSP Wiener index of order $k$ for $k \in \{2,3\}$ if and only if the original version is true.
In the remaining of this section, we will show that the analogue would not go through for the TSP variant of the problem when $k \ge 4.$ The essence is that $\mutspk(C_n) \le n$, while for a tree with order $n$ we may have $\mutspk(T) \sim 2n$ when $k$ is reasonably large.

Before proving this, we estimate $\mutspk(C_n).$

\begin{lemma}\label{lem:Cn_mutspk}
        For $k$ fixed and $n$ sufficiently large, we have 
        $$\mutspk(C_n) \sim \left(1-\frac{1}{2^{k-1}} \right) n.$$
\end{lemma}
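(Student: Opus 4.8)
The plan is to compute the expected TSP-length of $k$ uniformly random vertices on $C_n$ directly, exploiting the fact that on a cycle a shortest closed walk through a vertex set $S$ either traverses the whole cycle once (length $n$) or ``turns around'' and uses only an arc of the cycle, traversing it twice. Concretely, for $S = \{v_1, \dots, v_k\} \subseteq V(C_n)$, sort the vertices cyclically and let $g_1, \dots, g_k$ be the $k$ gap lengths between consecutive chosen vertices (so $\sum_i g_i = n$). Then the optimal closed walk skips the largest gap $g_{\max} = \max_i g_i$ and goes back and forth along the rest, giving
$$\tsp_k(S) = \min\{n,\ 2(n - g_{\max})\} = n - \max\{0,\ 2g_{\max} - n\}.$$
So I would write $\mutspk(C_n) = n - \Exp\!\left[\max\{0, 2g_{\max} - n\}\right]$, where the expectation is over a uniformly random $k$-subset, and the whole problem reduces to estimating $\Exp[\max\{0, 2g_{\max}-n\}]$, equivalently $\Pr[g_{\max} > n/2 + t]$ for $t \ge 0$.

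The key computation is the distribution of the largest gap. For $k$ vertices chosen uniformly on a cycle of length $n$, the joint distribution of the normalized gaps $(g_1/n, \dots, g_k/n)$ converges, as $n \to \infty$, to that of $k$ i.i.d.\ spacings of the unit interval, i.e.\ the Dirichlet$(1,\dots,1)$ distribution on the simplex; equivalently the $g_i/n$ behave like $Y_i/\sum_j Y_j$ with $Y_j$ i.i.d.\ exponential. By inclusion–exclusion (the classical spacings formula), $\Pr[\text{all gaps} \le xn] = \sum_{j\ge 0} (-1)^j \binom{k}{j}(1-jx)_+^{k-1} + o(1)$, so $\Pr[g_{\max} > xn] \to \sum_{j\ge 1}(-1)^{j+1}\binom{k}{j}(1-jx)_+^{k-1}$. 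Since we only care about $x > 1/2$, at most one gap can exceed $xn$, so only the $j=1$ term survives: $\Pr[g_{\max} > xn] \to k(1-x)^{k-1}$ for $x \in (1/2, 1]$. Therefore, substituting $x = 1/2 + s$,
$$\frac{1}{n}\Exp\!\left[\max\{0, 2g_{\max}-n\}\right] = \int_0^{1/2} 2\,\Pr[g_{\max} > n(\tfrac12 + s)]\,\frac{\d s}{1} \longrightarrow \int_0^{1/2} 2k\left(\tfrac12 - s\right)^{k-1}\d s = \frac{1}{2^{k-1}}.$$
Hence $\mutspk(C_n) = n\left(1 - \tfrac{1}{2^{k-1}} + o(1)\right)$, as claimed.

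To make this rigorous rather than heuristic, I would replace the asymptotic spacings statement by an exact finite-$n$ identity and bound the error. The exact probability that a given subset of $j$ gaps all have length at least $a$ is $\binom{n - ja + k - 1}{k-1}/\binom{n+k-1}{k-1}$ — or, more cleanly in the ``labelled positions on a cycle'' model, a ratio of falling factorials — and by inclusion–exclusion one gets an exact formula for $\Pr[g_{\max} \ge a]$ with at most $\lfloor n/a \rfloor$ terms. For $a > n/2$ only the single-gap term contributes, so $\Pr[g_{\max} \ge a]$ is exactly $k \binom{n-a+k-1}{k-1}/\binom{n+k-1}{k-1} = k(1 - a/n)^{k-1}(1 + O_k(1/n))$ uniformly in $a > n/2$. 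Summing this exact expression over $a$ from $\lceil n/2\rceil+1$ to $n$ and comparing the sum to the integral $\int_{1/2}^{1} k(1-x)^{k-1}\cdot n\,\d x$ yields $\Exp[(2g_{\max}-n)_+] = n/2^{k-1} + O_k(1)$, which is exactly the $o(n)$ error absorbed into ``$\sim$''. The only genuinely delicate point is getting the inclusion–exclusion count of subsets-of-vertices-with-prescribed-gap-lengths on a cycle exactly right (the cyclic symmetry and the $k!$ ordering factor need care), but this is a standard ``stars and bars on a circle'' argument and poses no real obstacle; everything else is routine estimation of a single binomial sum.
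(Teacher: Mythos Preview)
Your argument is correct and is essentially the paper's computation in probabilistic clothing: both rest on the identity $\tsp_k(S)=\min\{n,\,2(n-g_{\max})\}$ and then sum over $k$-subsets, the paper by directly summing $2(i-1)\cdot n\binom{i-2}{k-2}$ over spans $i$ and collapsing with the hockey-stick identity, you by writing $\mutspk(C_n)=n-\Exp[(2g_{\max}-n)_+]$ and integrating the tail of $g_{\max}$. One minor quibble: your stated exact tail $\Pr[g_{\max}\ge a]=k\binom{n-a+k-1}{k-1}\big/\binom{n+k-1}{k-1}$ is the compositions count rather than the $k$-subset count on $C_n$ (the correct one is $n\binom{n-a}{k-1}\big/\binom{n}{k}$ for $a>n/2$), but as you yourself flag, this only shifts lower-order terms and leaves the asymptotic $k(1-a/n)^{k-1}$ intact.
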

    \begin{proof}
        Let $d=\floorfrac{n}{2}.$
        Note that if $k$ vertices $v_1\ldots v_k$ do not belong to an interval of length at most $d$, then $\tsp_k(v_1,\ldots,v_k)=n.$
        If they do belong to an interval of length $i \le d$, then $\tsp_k(v_1,\ldots,v_k)=2i.$
        
     As such, using $\binom{n}{k}=\frac{n}{k}\binom{n-1}{k-1}$ and $\binom{n}{k}\sim \frac{n^k}{k!} \sim 2^k \binom{d}{k}$, we can compute that 
     \begin{align*}
         \Wtspk(C_n)&= n \sum_{i=k}^d \binom{i-2}{k-2}\cdot 2\cdot (i-1) + \left( \binom{n}{k} - n\sum_{i=k}^d \binom{i-2}{k-2}\right)n\\
         &= 2n(k-1) \sum_{i=k}^d \binom{i-1}{k-1} + \left( \binom{n}{k} - n \binom{d-1}{k-1}\right)n\\
         &= 2n(k-1) \binom{d}{k} + \left( \binom{n}{k} - n \binom{d-1}{k-1}\right)n\\
         &\sim 2n(k-1) \frac{1}{2^k} \binom{n}{k} + n\binom{n}{k} -nk \frac{1}{2^{k-1}} \binom{n}{k}\\
         &=  \binom{n}{k} \left( 1 - \frac{1}{2^{k-1}}\right)n
     \end{align*}
     
     Here we used that $\sum_{i=a}^b \binom{i}{a} = \binom{b+1}{a+1}$, which can be proven by induction.     
    \end{proof}

In the next theorem, we prove that the tree presented in Figure~\ref{fig:tree_beatingC_2d+1_wtspk} has a larger $k$-TSP Wiener index for any $k \ge 4,$ when $n$ (equivalently $d$) is sufficiently large.

\begin{figure}[h]
	\begin{center}
			\begin{tikzpicture}[x=1.1cm, y=1.1 cm]
			{
				\foreach \a in {0,120,240}{
				\draw[fill] (\a:3) circle (0.1);
				\draw[fill] (\a:2.5) circle (0.1);
				\draw[fill] (\a:2) circle (0.1);
				\draw[fill] (\a:0.5) circle (0.1);
				\foreach \x in {-10,-5,5,10}{
				\draw[thick] (\a+\x:3.5) -- (\a:3);
				\draw[dotted] (\a+5:3.5) -- (\a-5:3.5);
				\draw[fill] (\a+\x:3.5) circle (0.1);}		\draw[dotted] (\a:0.5) -- (\a:2);
				\draw[thick] (0:0) -- (\a:0.5);
				\draw[thick] (\a:2) -- (\a:3);
				}
				\draw [decorate,decoration={brace,amplitude=4pt},xshift=0pt,yshift=0pt] (10:3.7)--(-10:3.7)  node [black,midway,xshift=0.35cm]{$\frac d6$};	
				
				\coordinate [label=center:\large \textbf{$\frac d2$}] (A) at (1.5,0.3);
				\draw[fill] (0:0) circle (0.1);
			}
			\end{tikzpicture}
		\end{center}
    \caption{A symmetric tree with order $2d+1$ and diameter $d$ with large $\mutspk(T)$}
    \label{fig:tree_beatingC_2d+1_wtspk}
\end{figure}
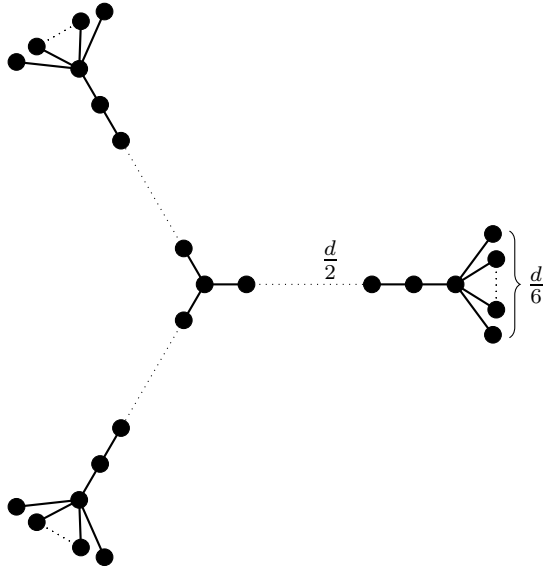

\begin{theorem}
    When $k \ge 4$ and $d$ is sufficiently large (in terms of $k$), there exist trees with diameter $d$ and order $n=2d+1$ for which $\mutspk(C_{n})<\mutspk(T).$
\end{theorem}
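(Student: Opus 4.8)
The plan is to take as $T=T_d$ the symmetric tree of Figure~\ref{fig:tree_beatingC_2d+1_wtspk}: a central vertex $c$ together with three legs (internally disjoint paths out of $c$) of length $L\sim d/2$, each leg ending in a \emph{broom}, i.e.\ a vertex carrying $M$ pendant leaves, with $M\sim d/6$. The lengths are chosen (a matter of a few $\pm$ constants, depending on the residue of $d$ modulo a small number) so that the order is exactly $1+3(L+M)=2d+1$ and the diameter is exactly $2(L+1)=d$; what matters for the estimate is only that $L\sim d/2$ and $M\sim d/6$, and both of these are forced by the two constraints ($2L\sim d$ and $3(L+M)\sim 2d$). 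Since $T_d$ is a tree, Proposition~\ref{thr:TSPW=2W_tree} gives $\mutspk(T_d)=2\mu_k(T_d)$, where $\mu_k$ denotes the average Steiner $k$-distance; so it suffices to prove $2\mu_k(T_d)>\mutspk(C_{2d+1})$, and by Lemma~\ref{lem:Cn_mutspk} the right-hand side equals $\bigl(1-2^{-(k-1)}\bigr)(2d+1)+o(d)$.

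To estimate $\mu_k(T_d)$ I would use the standard edge-counting identity for trees: an edge $e$ belongs to the minimal subtree spanning a set $S$ exactly when both components of $T_d-e$ meet $S$. Hence, averaging over uniformly random $k$-subsets $S$ and writing $a_e$ for the number of vertices on the smaller side of $e$ and $n=2d+1$,
$$\mu_k(T_d)=\sum_{e\in E(T_d)}\left(1-\frac{\binom{a_e}{k}+\binom{n-a_e}{k}}{\binom{n}{k}}\right).$$
The $3M$ broom edges have $a_e=1$ and together contribute $3M\cdot\frac kn=\frac k4+o(1)$, which is $o(d)$ and hence negligible. For the leg edge at depth $j$ (with $1\le j\le L$) the side away from $c$ has $a_e=L-j+1+M$ vertices; substituting $j=\beta L$ one gets $a_e/n\to\frac{4-3\beta}{12}$ uniformly in $\beta\in(0,1)$, so replacing each binomial ratio by the corresponding power and the sum over $j$ by an integral yields
\begin{align*}
\mu_k(T_d)&=\tfrac32\,d\cdot I_k+o(d),\\
I_k&:=\int_0^1\!\left(1-\left(\tfrac{4-3\beta}{12}\right)^{k}-\left(\tfrac{8+3\beta}{12}\right)^{k}\right)d\beta=1-\frac{4^{k+1}+11^{k+1}-8^{k+1}-1}{3(k+1)\,12^{k}}.
\end{align*}

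It then remains to compare leading coefficients of $d$, i.e.\ to check $3I_k>2\bigl(1-2^{-(k-1)}\bigr)$, which after rearranging reads
$$1+2^{-(k-2)}>\frac{4^{k+1}+11^{k+1}-8^{k+1}-1}{(k+1)\,12^{k}}.$$
For $k\ge 6$ the right-hand side is at most $\frac{11^{k+1}}{(k+1)12^{k}}=\frac{11}{k+1}\bigl(\tfrac{11}{12}\bigr)^{k}$, which is decreasing in $k$ and already below $1$ at $k=6$, while the left-hand side exceeds $1$; for $k\in\{4,5\}$ it is a direct numerical verification ($k=4$: $1.25>1.2471\ldots$; $k=5$: $1.125>1.0137\ldots$). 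Combining, $\mutspk(T_d)=3I_k\,d+o(d)>2\bigl(1-2^{-(k-1)}\bigr)d+o(d)=\mutspk(C_{2d+1})$ as soon as $d$ is large enough.

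The delicate point — and the reason the estimates must be carried out carefully rather than waved through — is that for $k=4$ the two leading coefficients are merely $3I_4=1.7528\ldots$ versus $2\cdot\tfrac78=1.75$. So one must genuinely control the error terms in $\mu_k(T_d)$: both in the approximation $\binom{m}{k}/\binom{n}{k}=(m/n)^{k}+O(1/n)$ (uniform over the relevant range of $m$, with $k$ fixed and $d\to\infty$) and in the Riemann-sum replacement of $\sum_{j=1}^{L}$ by $L\int_0^1$, to be sure that $\mu_k(T_d)/d$ really tends to $\tfrac32 I_k$, which is strictly larger than $1-2^{-(k-1)}$. Once that limit is pinned down, the rest is bookkeeping.
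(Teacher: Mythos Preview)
Your proof is correct and follows essentially the same route as the paper: the same tree, the same edge-by-edge formula $\sum_e\bigl(1-\tfrac{\binom{a_e}{k}+\binom{n-a_e}{k}}{\binom{n}{k}}\bigr)$ (the paper phrases it as ``edge traversed twice iff the $k$ vertices are split'' rather than via $\mutspk(T)=2\mu_k(T)$, but that is the same thing), the same power approximation and Riemann sum, and the same comparison against Lemma~\ref{lem:Cn_mutspk}; indeed your integral $I_k$ is just the paper's $\int_{1/12}^{1/3}\bigl(1-x^k-(1-x)^k\bigr)\,dx$ under the substitution $x=(4-3\beta)/12$, so the leading coefficients $3I_k$ and the paper's $12\int_{1/12}^{1/3}(\cdots)$ coincide. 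Your version is slightly more explicit (you evaluate the integral in closed form, dispose of the broom edges separately, and flag the tightness at $k=4$), but there is no substantive difference in strategy.
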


\begin{proof}
    Take the tree $T$ which consists of a central vertex of degree $3$ which is connected to $3$ brooms of length $\left \lfloor \frac d2 \right \rceil $ and $\left \lfloor \frac d6 \right \rceil$ leaves each, in such a way that the number of leaves of $T$ is $\floorfrac d2$, the order of $T$ is $2d+1$ and its diameter equals $d.$ 
    The shortest closed walk containing $k$ vertices, will contain an edge $e=uv$ twice if and only if among the $k$ vertices, there are vertices closer to $u$ than to $v$ and also in the reverse order.
    If $e$ is an edge such that $T \backslash e$ has two components with $a$ resp. $b$ vertices ($a+b=n$), then the probability that the $k$ vertices are in the same component is $\frac{\binom{a}{k}+\binom{b}{k}}{\binom{n}{k}}.$
    When $a$ and $b$ are both much larger than $k$, this is approximately $\frac{a^k+b^k}{(a+b)^k}$, i.e., of the form $(1+o(1))\frac{a^k+b^k}{(a+b)^k}.$
    The latter follows since $\binom{a}{k}\sim \frac{a^k}{k!}$ for $k=o(a)$ and similar for $b$.
    For $d$ sufficiently large in terms of $k$, the latter approximation is valid for every non-pendent edge (edge not incident with a leaf).
    The contribution of the pendent edges to $\mutspk(T)$ is $$2\floorfrac{d}{2} \frac{\binom{n}{k}-\binom{n-1}{k}}{\binom{n}{k}}= 2\floorfrac{d}{2} \frac{\binom{n-1}{k-1}}{\binom{n}{k}}=2\floorfrac{d}{2} \frac{k}{n}\sim \frac{k}{2}=o(d).$$

    Remembering that every edge in the closed walk has to be traversed twice, the expected value for $\mutspk(T)$ is 
    $$ 2\floorfrac{d}{2} \frac{\binom{n-1}{k-1}}{\binom{n}{k}}+ 6 \cdot \sum_{a=\left \lfloor \frac d6 \right \rceil}^{\left \lfloor \frac {2d}{3} \right \rceil}  \frac{\binom{a}{k}+\binom{n-a}{k}}{\binom{n}{k}}.$$
    
    By the previously explained approximations, this is well-estimated by 
 \begin{align*}
    6 \cdot \sum_{a=\left \lfloor \frac d6 \right \rceil}^{\left \lfloor \frac {2d}{3} \right \rceil}\left(1- \frac{ a^k + (2d-a)^k}{(2d)^k} \right) & \sim 6\cdot \int_{ a=\frac d{6}}^{\frac{2d}3} \left(1- \frac{ a^k + (2d-a)^k}{(2d)^k} \right) \d a\\
     & \sim 6\cdot \int_{ \frac 1{12}}^{\frac13} \left( 1-x^k-(1-x)^k \right) \d x \cdot 2d
 \end{align*}
    When $k \ge 6,$ this is larger than $2d+1=n>\mutspk(C_{n})$ and so the result follows as we assume $d$ is sufficiently large.
    For $k=5$, it is larger than $1.98d$ and for $k=4$ it is larger than $1.752 d$.
    On the other hand, by Lemma~\ref{lem:Cn_mutspk} we have $\mutspk(C_n) \sim \left(2-\frac{1}{2^{k-2}} \right) d,$ which is smaller than the two values for the tree when $k \in \{4,5\}.$
\end{proof}


\section{Conclusion and further directions}\label{sec:conc}

In this paper, we defined the average TSP distance of a graph and digraph, as well as a version for the Steiner distance for digraphs.
We compared and investigated the extremal behaviour of average TSP distance and average Steiner distance.
While solving a general TSP instance precisely is a hard problem, the study of the average TSP distance, e.g. for random graphs or certain graph classes, may give some fundamental insights on the travelling salesman problem. 
Also certain problems for other distance measures might be interesting.
Let $n$ and $k$ be integers such that $2 \le k \le n$ and let $G$ be a graph of order $n.$
The TSP-$k$-eccentricity of a vertex $v$ of $G$ equals $\ecc_{tsp,k}(G,v)=\ecc_{tsp,k}(v)= \max\{\tsp_k(S) \mid S\subset V, \abs{S}=k,  v \in S\}.$
The TSP-$k$-radius and TSP-$k$-diameter of a vertex $v$ of $G$ are defined by $\rad_{tsp,k}(G)=\min\{\ecc_{tsp,k}(v) \mid v \in V\}$ and $\diam_{tsp,k}(G)=\max\{\ecc_{tsp,k}(v) \mid v \in V\}$.
The TSP-$k$-radius is related to the worst solution of a TSP instance with $k-1$ destinations and an optimally chosen base location.

A few observations are immediate for the eccentricities (and the same is true for the radius and diameter analogues).
For a tree $T$, $\ecc_{tsp,k}$ equals $2\ecc_k$.
If $H$ is a spanning subgraph of $G$, then $\ecc_{tsp,k}(G,v)\le \ecc_{tsp,k}(H,v)$.
By starting from a clique and deleting one edge belonging to a triangle at a time till one ends with a path, for $k \ge 3$ one can observe that given the order, $\ecc_{tsp,k}(v)$ can take any value between $k$ and $2(n-1)$. 
It would be interesting to see more surprising behaviour/ results that gives additional intuition on TSP instances. 
\section*{Acknowledgement}
We thank the two reviewers for careful reading and many suggestions to improve the readability and the exposition of this paper.
We would like to express our gratitude to the American Mathematical Society for organizing the Mathematics Research Community workshop ``Trees in Many Contexts", that triggered this work.
This workshop was supported by the National Science Foundation under Grant Number DMS $1916439$.

\textbf{Open access statement.} For the purpose of open access,
a CC BY public copyright license is applied
to any Author Accepted Manuscript (AAM)
arising from this submission.


\begin{thebibliography}{10}

\bibitem{SC19}
S.~Cambie.
\newblock An asymptotic resolution of a problem of {P}lesn\'{\i}k.
\newblock {\em J. Combin. Theory Ser. B}, 145:341--358, 2020.

\bibitem{COT89}
G.~Chartrand, O.~R. Oellermann, S.~L. Tian, and H.-B. Zou.
\newblock Steiner distance in graphs.
\newblock {\em \v{C}asopis P\v{e}st. Mat.}, 114(4):399--410, 1989.

\bibitem{DOS96}
P.~Dankelmann, O.~R. Oellermann, and H.~C. Swart.
\newblock The average {S}teiner distance of a graph.
\newblock {\em J. Graph Theory}, 22(1):15--22, 1996.

\bibitem{DLVW}
E.~DeLaVi\~{n}a and B.~Waller.
\newblock Spanning trees with many leaves and average distance.
\newblock {\em Electron. J. Combin.}, 15(1):Research Paper 33, 16, 2008.

\bibitem{GP02}
G.~Gutin and A.~P. Punnen, editors.
\newblock {\em The traveling salesman problem and its variations}, volume~12 of
  {\em Combinatorial Optimization}.
\newblock Kluwer Academic Publishers, Dordrecht, 2002.

\bibitem{LMG16}
X.~Li, Y.~Mao, and I.~Gutman.
\newblock The {S}teiner {W}iener index of a graph.
\newblock {\em Discuss. Math. Graph Theory}, 36(2):455--465, 2016.

\bibitem{Guo90}
W.~G. Liu.
\newblock A lower bound for the {S}teiner tree problem in directed graphs.
\newblock {\em Networks}, 20(6):765--778, 1990.

\bibitem{MaoSurvey}
Y.~{Mao}.
\newblock {Steiner Distance in Graphs--A Survey}.
\newblock {\em arXiv e-prints}, page arXiv:1708.05779, Aug. 2017.

\bibitem{MPP18}
D.~Marx, M.~Pilipczuk, and M.~Pilipczuk.
\newblock On subexponential parameterized algorithms for {S}teiner tree and
  directed subset {TSP} on planar graphs.
\newblock In {\em 59th {A}nnual {IEEE} {S}ymposium on {F}oundations of
  {C}omputer {S}cience---{FOCS} 2018}, pages 474--484. IEEE Computer Soc., Los
  Alamitos, CA, 2018.

\bibitem{Oellermannsurvey}
O.~R. Oellermann.
\newblock Steiner distance in graphs with emphasis on eccentricity measures: a
  survey.
\newblock volume~99, pages 179--189. 1994.
\newblock Twenty-third Manitoba Conference on Numerical Mathematics and
  Computing (Winnipeg, MB, 1993).

\bibitem{P84}
J.~Plesn\'{\i}k.
\newblock On the sum of all distances in a graph or digraph.
\newblock {\em J. Graph Theory}, 8(1):1--21, 1984.

\end{thebibliography}
\end{document}